\newcommand{\hg}[1]{\text{H}_{#1}}
\begin{document}

\textwidth 6.2in
\textheight 7.6in
\evensidemargin .75in
\oddsidemargin.75in

\newtheorem{Thm}{Theorem}
\newtheorem{Lem}[Thm]{Lemma}
\newtheorem{Cor}[Thm]{Corollary}
\newtheorem{Prop}[Thm]{Proposition}
\newtheorem{Rm}{Remark}
\newtheorem{Qu}{Question}
\newtheorem{Def}{Definition}
\newtheorem{Exm}{Example}

\def\a{{\mathbb a}}
\def\C{{\mathbb C}}
\def\A{{\mathbb A}}
\def\B{{\mathbb B}}
\def\D{{\mathbb D}}
\def\E{{\mathbb E}}
\def\R{{\mathbb R}}
\def\P{{\mathbb P}}
\def\S{{\mathbb S}}
\def\Z{{\mathbb Z}}
\def\O{{\mathbb O}}
\def\H{{\mathbb H}}
\def\V{{\mathbb V}}
\def\Q{{\mathbb Q}}
\def\Cn{${\mathcal C}_n$}
\def\CM{\mathcal M}
\def\CG{\mathcal G}
\def\CH{\mathcal H}
\def\CT{\mathcal T}
\def\CF{\mathcal F}
\def\CA{\mathcal A}
\def\CB{\mathcal B}
\def\CD{\mathcal D}
\def\CP{\mathcal P}
\def\CS{\mathcal S}
\def\CZ{\mathcal Z}
\def\CE{\mathcal E}
\def\CL{\mathcal L}
\def\CV{\mathcal V}
\def\CW{\mathcal W}
\def\IC{\mathbb C}
\def\IF{\mathbb F}
\def\IK{\mathcal K}
\def\IL{\mathcal L}
\def\IP{\bf P}
\def\IR{\mathbb R}
\def\IZ{\mathbb Z}

\title{A Note on Knot Concordance}
\author{EYLEM ZEL\.{I}HA YILDIZ}
\keywords{}
\address{Department of Mathematics, Michigan State University, MI, 48824}
\email{ezyildiz@msu.edu}
\date{\today}

\begin{abstract} 
We use classical techniques to answer some questions raised by Daniele Celoria about almost-concordance of knots in arbitrary closed $3$-manifolds. We first prove that, given $Y^3 \neq S^3$, for any non-trivial element $g\in \pi_1(Y)$ there are infinitely many distinct smooth almost-concordance classes in the free homotopy class of the unknot. In particular we consider these distinct smooth almost-concordance classes on the boundary of a Mazur manifold and we show none of these distinct classes bounds a PL-disk in the Mazur manifold, but all the representatives we construct are topologically slice. We also prove that all knots in the free homotopy class of $S^1 \times pt$ in $S^1 \times S^2$ are smoothly concordant.
\end{abstract}

\date{}
\maketitle

\setcounter{section}{-1}

\vspace{-.35in}

\section{Introduction}

\vspace{.1in}

In this work we consider manifolds that are smooth and oriented. Let $Y$ be a closed, connected, oriented $3$-manifold. A \emph{knot} $k$ in $Y$ is an isotopy class of a smooth embeddings $S^1 \hookrightarrow Y$. Two knots $k_1$ and $k_2$ are said to be \emph{concordant} if there is a smooth proper embedding of an annulus $F: S^1\times [0,1] \hookrightarrow Y\times[0,1] $, such that its boundary $\partial{F(S^1\times [0,1])} = k_1\times \{0\} \cup (-k_2)  \times \{1\}$ where $(-k_2)$ is the same knot $k_2$ with the reversed orientation. If we allow $F$ to have only finitely many singular points, all of which are cones over knots, then $k_1$ and $k_2$ are called \emph{PL-concordant}. We call these knots \emph{singular concordant} if we allow $F$ to be an immersion instead an embedding. Two knots are singular concordant if and only if they are freely homotopic. One can see this fact by using the Immersion Theorems and general position arguments (can be found in \cite{hm1}) on the trace of homotopy.
 
\noindent Concordance is an equivalence relation $\sim $ on the set of oriented knots in $Y$. The set of equivalence classes is denoted by;
$$ \mathcal{C}(Y) =\{ \text{Oriented knots in Y}\}/\sim .$$
Concordant knots $k_1$ and $k_2$ are freely homotopic, hence they are homologous. 
In \cite{c1} Daniele Celoria defines the concept of almost-concordance of 
knots. Two knots $k_1$ and $k_2$ in $Y$ are said to be \emph{almost-concordant} if there are $ k_1^{'},\; k_2^{'} \subset S^3 $ such that $ k_1 \# k_1^{'} \sim k_2 \# k_2^{'} $, and this is expressed by $ k_1 \dot{\sim} k_2 $. Like concordance, almost-concordance is an equivalence relation, and it implies free homotopy of knots.\newline We denote almost-concordance classes by $\widetilde{\mathcal{C}}(Y)$. More generally;
$$\mathcal{C}_{\gamma}(Y):= \mathcal{K}_{\gamma}(Y)/\sim, $$ 
$$\widetilde{\mathcal{C}}_{\gamma}(Y):= \mathcal{K}_{\gamma}(Y)/\dot{\sim} \;\; ,$$
 where  $\mathcal{K}_{\gamma}(Y)$ is the free homotopy class of a knot $\gamma $ in $Y$.

\begin{Thm} \label{main1}
Given a closed $3$-manifold $Y$, for a non trivial element $g \in \pi_1(Y)$ we can construct infinitely many distinct almost-concordance classes in the free homotopy class of the unknot. If $h\not\in\{g,g^{-1}\}$, then the almost-concordance classes constructed (as in Figure~\ref{figure3}) using $g$ and $h$ are disjoint.
\end{Thm}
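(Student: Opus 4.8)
The plan is to construct, for each nontrivial $g \in \pi_1(Y)$, a specific knot in the free homotopy class of the unknot and then detect infinitely many distinct almost-concordance classes by finding a suitable invariant that survives connect-summing with knots in $S^3$.
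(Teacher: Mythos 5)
Your one-sentence plan is the right plan --- it is exactly the strategy the paper follows --- but as written it contains no mathematical content: every step that requires an actual argument is left as a placeholder. Concretely, you have not (1) identified the invariant; (2) proved it is a concordance invariant; (3) proved it is unchanged by connect-sum with knots from $S^3$; (4) exhibited knots realizing infinitely many distinct values; or (5) addressed the second assertion of the theorem, that the families built from $g$ and from $h\not\in\{g,g^{-1}\}$ are disjoint. Each of these is a genuine step, not a formality.

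For comparison, here is what fills those gaps in the paper. The invariant is Wall's self-intersection number, in the form of Schneiderman's map $\mu:\mathcal{C}_1(Y)\to\widetilde{\Lambda}$, where a null-homotopic knot $k$ bounds an immersed disk $D\subset Y\times I$ and $\mu(k)=\sum_p \mathrm{sign}(p)\cdot g_p$ lives in $\widetilde{\Lambda}=\Z[\pi_1 Y]/(\{g-g^{-1}\}\oplus\Z[1])$; the quotient kills the sheet-choice ambiguity and cusp homotopies. Well-definedness (independence of $D$) is not automatic: it uses the fact that $\pi_2(Y)$ is generated as a $\pi_1(Y)$-module by disjoint \emph{embedded} spheres (Hatcher), so the sphere $D\cup_k D'$ contributes zero. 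Invariance under connect-sum with a local knot $k'\subset S^3$ holds because a singular disk for $k'$ lies in a simply connected region, so its contribution $\beta\mu(D')\beta^{-1}$ lands in the $\Z[1]$ summand, which is quotiented out --- this is precisely why $\mu$ descends to almost-concordance classes, and it is the crux of your phrase ``survives connect-summing.'' Realization is by an explicit construction: push an arc of the unknot around a loop representing $g$ and introduce clasps; iterating produces knots $k_n$ with $\mu(k_n)=ng$, and since $g\neq 1$ the elements $ng$ are pairwise distinct in $\widetilde{\Lambda}$, giving infinitely many classes. Finally, the disjointness claim for $h\not\in\{g,g^{-1}\}$ is exactly where the relation $g\sim g^{-1}$ in $\widetilde{\Lambda}$ matters: the classes $ng$ and $mh$ can only coincide if $h\in\{g,g^{-1}\}$, which is why the theorem excludes precisely that case. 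Until you supply an invariant with these properties and a realization argument, the proposal is a statement of intent rather than a proof.
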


\noindent A question raised in \cite{c1} is \lq\lq Does there exists a pair $(Y,m)$ such that $C_m(Y)$ is finite?"  Theorem~\ref{main2} provide a positive answer.

\begin{Thm}\label{main2}
All knots in the free homotopy class of $S^1 \times pt$ in $S^1 \times S^2$ are smoothly concordant, i.e., $|\mathcal{C}_x(S^1\times S^2)|=1$  where $x$ represents $S^1\times pt $ in $S^1 \times S^2$. 
\end{Thm}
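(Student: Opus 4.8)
The plan is to show that every knot $k$ in the class $x$ is smoothly concordant to the standard core $c=S^1\times pt$; since concordance is an equivalence relation this immediately gives $|\mathcal{C}_x(S^1\times S^2)|=1$. Write $\theta$ for the $S^1$-coordinate and let $S_\theta=\{\theta\}\times S^2$ be the corresponding $2$-sphere. First I would isotope $k$ so that it is transverse to the foliation $\{S_\theta\}$ and the projection $k\to S^1$ is Morse; then for generic $\theta$ the intersection $k\cap S_\theta$ is a finite set of signed points whose signed count is $[k]\cdot[S^2]=1$, because $[k]$ generates $\hg{1}(S^1\times S^2)=\Z$ and agrees with $[S^1\times pt]$ (freely homotopic knots are homologous).

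The heart of the argument is to reduce, by a concordance, to the case where $k$ meets a single sphere $S_{\theta_0}$ in exactly one point. If $k\cap S_{\theta_0}$ has more than one point then, the signed count being $1$, there is a pair of adjacent points $p_+,p_-$ of opposite sign cobounding a subarc $\alpha\subset k$ on one side of $S_{\theta_0}$; joining them by an embedded arc $\beta\subset S_{\theta_0}$ produces a Whitney circle $\alpha\cup\beta$. I would cancel this pair by performing the associated Whitney move, but pushed into the collar $(S^1\times S^2)\times[0,1]$: a $2$-dimensional guiding disk and the $1$-dimensional knot are disjoint in generic position in the $4$-manifold (since $2+1<4$), so the move can be realized by an embedded annulus, i.e.\ an honest concordance, even when the guiding disk is obstructed inside the $3$-manifold. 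Concretely this is a single saddle move followed by the death of a trivial circle in the movie of the concordance, which lowers the geometric intersection number by two. Iterating leaves $k$ meeting $S_{\theta_0}$ in one point.

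Once $k$ meets $S_{\theta_0}$ geometrically once, I would invoke the \emph{light bulb trick}: a knot in $S^1\times S^2$ meeting the non-separating sphere $S_{\theta_0}$ transversally in a single point is isotopic to the core $c$, since the unique strand crossing the sphere may be swept around the $S^1$-factor and spun through the sphere to remove all local knotting. In particular such a knot is concordant to $c$, and composing the two steps shows $k\sim c$ for every $k$ in the class $x$, which is the theorem.

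The main obstacle is the cancellation step: one must be sure the Whitney move is realized by an \emph{embedded} annulus rather than merely an immersed one. The extra dimension supplied by the collar kills the intersections between the guiding disk and $k$ automatically, but the self-intersections and the framing of the pushed-in disk still require control; the role of the dual class $[S^2]$, which meets the trace of the homotopy algebraically once, is exactly what lets these be handled despite $\pi_1\big((S^1\times S^2)\times[0,1]\big)=\Z$ being nontrivial. For this reason I would carry out the cancellation as an explicit movie of band moves, verifying embeddedness frame by frame, rather than appealing to a general Whitney-trick statement.
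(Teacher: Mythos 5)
There is a genuine gap, and it sits exactly at the step you flag as the ``main obstacle'': the cancellation of a pair of intersection points by a concordance. After the saddle move guided by $\beta$, the component that splits off is $\alpha\cup\beta$, where $\alpha$ is an entire subarc of $k$ lying on one side of $S_{\theta_0}$. This circle is in general \emph{not} trivial: $\alpha$ carries an arbitrary amount of the knotting of $k$, so $\alpha\cup\beta$ can be knotted in $S^1\times S^2$ and nontrivially linked with the remaining component. A death in a movie requires the circle to bound an embedded disk \emph{at that level}, disjoint from the rest of the link, so no immediate ``death of a trivial circle'' is available; and capping $\alpha\cup\beta$ later instead requires a smoothly embedded disk in the collar avoiding the trace of the other component. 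Producing such disks is precisely where the smooth four-dimensional Whitney trick fails: null-homotopy (which you do have, since $\alpha\cup\beta$ misses the sphere) only yields immersed disks, and removing self-intersections smoothly costs genus, which destroys the concordance. Your final paragraph concedes this must be ``verified frame by frame'' but supplies no verification, and there is no general dual-sphere statement you can cite that hands you an embedded annulus in the smooth category for free.

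The paper avoids this trap by never splitting off large circles. It realizes each \emph{crossing change} of a free homotopy from $k$ to the core $k'=S^1\times pt$ by a saddle move whose extra component is only a small circle linking the strands at that crossing. After all crossing changes the knot has become $k'$, accompanied by a collection of small linking circles, and these are unknotted and unlinked from $k'$ and from each other \emph{inside the 3-manifold} by sliding over the dual sphere (the $0$-framed circle in the surgery diagram) --- the classical light bulb move --- after which they bound disjoint embedded disks and are capped, giving a genus-zero, hence annular, cobordism. Everything is explicit and three-dimensional; no Whitney disk is ever needed. Your last step (a knot meeting $S_{\theta_0}$ transversely in one point is isotopic to the core) is correct and classical, and it is the same light bulb phenomenon the paper exploits; but to repair your proof you should replace the pair-cancellation by the paper's move: split off small meridian-type circles via crossing changes, not circles containing whole arcs of $k$.
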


\noindent After this paper was posted a similar result to Theorem~\ref{main2} is appeared in \cite{dnpr}. In \cite{fnop} there are also related results to the above Theorems in the topological category. 

\vspace{.2cm}
\noindent\textbf{Acknowledgement.} I would like to thank Matthew Hedden for encouraging me to think about this problem and for valuable suggestions, and also I thank Selman Akbulut for helpful discussions.

\section{Wall's Self Intersection Number, a Concordance Invariant for Null-homotopic Knots, and Proof of Theorem \ref{main1}}

There are many approaches to knot concordance problem; here we focus on one of the classical techniques. This technique is based on Wall's intersection number \cite{w1}. The application of this idea to knot concordance was studied in \cite{s1} by Schneiderman. 

\noindent Let $k$ be a null-homotopic knot in $Y$; consider a singular concordance of $k$ to the unknot $u$, after capping with a disk the unknot, we get a proper immersion of a disk $ D \looparrowright Y \times I$ with $k= \partial D$. Let $p$ be a transverse self intersection of the immersion $D$ then any small neighbourhood of $p$ looks like two surfaces intersecting at $p$. These surfaces are called \emph{sheets}. The self-intersection number of $k$, defined as Wall's self intersection number of $D$, takes its value in the group ring $\Z[\pi_1 Y]$. To define this self intersection number we first fix a path from the base point $y_0$ of $Y \times I$ to a basepoint of the immersed disk $D$, called a \emph{whisker} of $D$. Now $g_p \in \pi_1{(Y,y_0)}$ is defined in the following way: it is a loop starting from $y_0$ going to the basepoint of $D$ using the whisker, then to the self-intersection point $p$ of $D$, then changing the sheet at the intersection point going back to the basepoint of $D$, and finally to $y_0$ using the whisker.
$$ \mu(k) :=\mu(D) = \sum_{p} sign(p)\cdot g_p \in \Z[\pi_1 Y].$$ 
\noindent Since $D$ is simply connected, the loop $g_p$ does not depend on the path we choose while travelling on $D$ as long as it stays away from self-intersection points. $sign(p)$ is $+1$ if the orientation of $Y\times I$ at $p$ matches with the orientation induced from sheets of $D$ at $p$, and it is $-1$ otherwise. After fixing the whisker there is still an indeterminacy coming from the choice of the first sheet. Altering this choice changes the loop from $g_p$ to $g_p^{-1}$.  Also, self-intersection points coming from cusp homotopies give elements which are trivial in $\pi_1(Y)$. Since we are interested in a homotopy invariant, we also quotient out these elements, arriving at the following abelian group   $$ \widetilde{\Lambda}:= \frac{ \mathbb{Z}[\pi_1Y ] }{ \{ g-g^{-1} \;|\; g \in \pi_1(Y)  \} \oplus \mathbb{Z}[1]} . $$

\noindent Here $\mathbb{Z}[1]$ is the abelian subgroup generated by the trivial element of $\pi_1(Y)$. Homotopy invariance in the above discussion follows from the following two Propositions.

\begin{Prop}[from Chapter 1.6 of \cite{fq1}] A homotopy between immersions of a surface in a $4-$manifold is homotopic to a composition of homotopies, each of which is a regular homotopy or a cusp homotopy in some ball, or the inverse of a cusp homotopy.

\end{Prop}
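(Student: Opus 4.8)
\noindent The plan is to view the homotopy $F:\Sigma\times[0,1]\to M$ as a generic smooth map and to read the decomposition off from the moments, in the time direction, where the immersion condition fails. The guiding dimension count is this: for a single smooth map $f:\Sigma^2\to M^4$ the locus in the $1$-jet bundle $J^1(\Sigma,M)$ where the differential has corank one is a stratum of codimension $p-n+1 = 4-2+1 = 3$, so the jet extension of a generic map out of a $2$-dimensional source is transverse to it and therefore misses it; a generic single map is an immersion. For a $1$-parameter family, whose source $\Sigma\times[0,1]$ is $3$-dimensional, the same stratum meets the jet extension transversally in dimension $3-3=0$, i.e. in finitely many points. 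First I would make this heuristic precise.

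\noindent \emph{Genericity.} Applying Thom (jet) transversality to the map $(x,t)\mapsto j^1_\Sigma f_t(x)$ from $\Sigma\times[0,1]$ into $J^1(\Sigma,M)$, I would perturb $F$ so that its restriction to each time slice fails to be an immersion only at finitely many points $(x_1,t_1),\dots,(x_n,t_n)$; a further (multijet) genericity step arranges that these occur at distinct times $t_1<\cdots<t_n$, that each is a corank-one degeneration, and that the time direction is transverse to the corank-one stratum there. Because $f_0$ and $f_1$ are already immersions, this perturbation can be carried out rel $\Sigma\times\{0,1\}$.

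\noindent \emph{Local model.} The crux is to identify the germ of $F$ at each singular point $(x_i,t_i)$. Here I would invoke the classification of generic singularities of smooth maps (Mather/Morin theory): the only codimension-one degeneration of a $1$-parameter family of immersions of a surface in a $4$-manifold is, up to local diffeomorphisms of source and target, the standard cusp, in which exactly one transverse double point is born as $t$ increases through $t_i$ (equivalently, dies if $t$ is read backwards). Concretely one checks that near $(x_i,t_i)$ the family is equivalent to the standard model, supported in a small ball $B\subset M$, and that for $t$ slightly below and slightly above $t_i$ the restriction to $B$ is an immersion, the two differing by a single self-intersection point; outside $B$, and for $t$ near $t_i$, the family is an immersion and hence a regular homotopy.

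\noindent \emph{Assembly.} Between consecutive singular times $f_t$ is an immersion for all $t$, so $F|_{\Sigma\times[t_i,t_{i+1}]}$ is a regular homotopy. Reparametrizing time so that each cusp happens in a disjoint short interval with support in its ball, and filling the complementary intervals with the ambient regular homotopies, I obtain $F$ as a composition $R_0\ast C_1\ast R_1\ast\cdots\ast C_n\ast R_n$, where each $R_j$ is a regular homotopy and each $C_i$ is a cusp homotopy or the inverse of one, according to whether the corresponding double point is born or dies. This is the asserted decomposition. The main obstacle is the local-model step: the transversality count only guarantees that the failures of immersion are isolated corank-one points, and one still has to rule out more complicated local behaviour and pin the germ down to the explicit cusp up to the birth-versus-death ambiguity that produces cusps and their inverses.
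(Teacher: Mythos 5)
The paper offers no proof of this proposition: it is quoted verbatim as background from Chapter 1.6 of \cite{fq1}, so there is no in-paper argument to compare yours against. That said, your sketch is the standard singularity-theoretic argument underlying the cited result, and it is essentially the route taken in the source (going back to Whitney): jet and multijet transversality to arrange that the immersion condition fails only at finitely many corank-one points, occurring at distinct times and crossed transversally by the family; the normal-form theorem for generic one-parameter families of maps of surfaces into $4$-manifolds, identifying each crossing with the standard cusp (birth or death of a single transverse double point, of either sign); and a reparametrization of time splitting $F$ into regular homotopies interleaved with cusp homotopies and their inverses. Your dimension counts are correct: the corank-one stratum has codimension $r(p-n+r)=1\cdot(4-2+1)=3$, so a generic single map misses it while a generic family meets it in isolated points, and the corank-two stratum has codimension $8$ and is missed even by the family. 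The only substantive gap is the one you flag yourself: the local normal-form step is where all the content lies, and invoking Mather/Morin theory there is a citation rather than a proof; since the proposition itself enters the paper only as a citation, that is a reasonable place to stop, though a complete write-up would also note that the initial generic perturbation, being $C^0$-small and rel $\Sigma\times\{0,1\}$, produces a family homotopic rel boundary to the original one, which is what the phrase \lq\lq is homotopic to a composition" requires.
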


\begin{Prop}[from Chapter 1.7 of \cite{fq1}]
Intersection numbers and reduced self intersection numbers in $ \widetilde{\Lambda}$ are invariant under homotopy rel boundary. The $\Z[1]$ component of the self intersection number is invariant under regular homotopy, and conversely two immersions of a sphere or disk which are homotopic rel boundary, and have the same framed boundary, are regularly homotopic rel boundary if and only if the $\Z[1]$ component of the self intersection numbers are equal. 
\end{Prop}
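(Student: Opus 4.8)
The plan is to deduce everything from the preceding decomposition Proposition (Chapter~1.6 of \cite{fq1}), which writes any homotopy of immersions rel boundary as a finite composition of regular homotopies, cusp homotopies, and inverses of cusp homotopies, each supported in a ball. It then suffices to record how the intersection number $\lambda$, the reduced self-intersection number $\mu \in \widetilde{\Lambda}$, and the $\Z[1]$-component of $\mu$ change under each elementary move, and to assemble these local computations. The first two assertions are the routine part; the converse will be the crux.

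First I would treat a regular homotopy. By general position a generic regular homotopy decomposes further into isotopies, triple-point passages, and finger moves together with their inverses (Whitney moves). A triple-point passage merely permutes the double points without altering their signs or associated group elements, so it is harmless. A finger move is the essential case: it creates exactly two new transverse double points $p_1,p_2$ joined by a short arc running across the pushed finger. Because the finger is a small embedded disk, a Whitney disk exhibits $g_{p_1}$ and $g_{p_2}$ as equal in $\pi_1(Y)$ up to inversion, while the two points carry opposite signs; hence their net contribution vanishes in $\widetilde\Lambda$ (and, for $\lambda$, where the sheet order is fixed, it already equals $g-g=0$ in $\Z[\pi_1 Y]$). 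Since the signed count of the created pair is zero, the $\Z[1]$-component is likewise unchanged. Consequently $\lambda$, $\mu$, and the $\Z[1]$-component of $\mu$ are all invariant under a regular homotopy.

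Next I would treat a cusp homotopy. Its local model introduces (or removes) a single double point whose defining loop bounds the little disk traced out by the cusp, so its group element is trivial and it contributes $\pm[1]$ to $\mu$; it does not affect $\lambda$, which counts only intersections between distinct sheets. Passing to the quotient $\widetilde\Lambda = \Z[\pi_1 Y]/(\{g-g^{-1}\}\oplus\Z[1])$ annihilates this change, so the reduced self-intersection number is invariant under cusp homotopies as well. Combined with the previous paragraph, this shows that $\lambda$ and reduced $\mu$ are invariant under arbitrary homotopies rel boundary. Meanwhile the $\Z[1]$-component jumps by exactly $\pm1$ at each cusp and is untouched by regular homotopies, which is the assertion for the $\Z[1]$-component.

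Finally, for the converse I would argue as follows. The ``only if'' direction is immediate from the invariance of the $\Z[1]$-component under regular homotopy. For ``if'', take any homotopy rel boundary between the two immersions and decompose it via the preceding Proposition into regular pieces and some number $N_+$ of positive and $N_-$ of negative cusps. Since regular pieces fix the $\Z[1]$-component and each signed cusp shifts it by $\pm1$, the hypothesis that the two $\Z[1]$-components agree forces $N_+=N_-$. I would then pair each positive cusp with a negative one and cancel them: using that the domain is a sphere or disk, hence simply connected, slide the two cusp points together along an arc in the domain and apply the standard regular cancellation of an oppositely-signed cusp pair, the triviality of the cusp group element ensuring no residual double point survives. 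Iterating removes all cusps and leaves a regular homotopy rel boundary. The main obstacle is precisely this last step: one must verify that the cancellation can be carried out \emph{rel boundary} and with no net creation of double points, which is exactly where simple-connectivity of the domain (so the connecting arc is essentially unique and carries the trivial element) and the hypothesis of equal framed boundary (so that no boundary-framing obstruction intervenes) are both essential.
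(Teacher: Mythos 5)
Your proposal addresses a statement the paper itself never proves: the Proposition is quoted verbatim as background from Chapter 1.7 of \cite{fq1}, just as the decomposition result preceding it is quoted from Chapter 1.6, so there is no in-paper argument to compare against. What you have written is, in substance, a faithful reconstruction of the Freedman--Quinn argument: decompose any homotopy rel boundary into regular homotopies and $\pm$ cusps; observe that finger moves and Whitney moves create or destroy pairs of double points with equal group elements and opposite signs (so the contribution is $g-g=0$ for intersection numbers, and dies in $\widetilde{\Lambda}$ for self-intersections regardless of sheet-choice ambiguity); observe that a cusp contributes exactly $\pm 1$ to the $\Z[1]$ coefficient and nothing else; and prove the converse by counting cusps and cancelling them in pairs. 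Two points would need sharpening in a complete write-up. First, the positive and negative cusps produced by the decomposition occur at different stages of the homotopy, in different balls, separated by regular homotopies; before cancelling a $+/-$ pair you must commute each cusp past the intervening regular homotopies and slide both into a single ball --- this is where the connecting arc in the simply connected domain actually gets used, and it is unobstructed precisely because the domain is a disk or sphere. Second, your phrase ``standard regular cancellation of an oppositely-signed cusp pair'' carries the real weight of the converse: the justification is that the two double points born from opposite cusps in one ball admit a small, embedded, correctly framed Whitney disk inside that ball, so a Whitney move (a regular homotopy) removes them and restores the standard local embedded picture; being a local-model statement, this does not collide with the global failure of the Whitney trick in dimension four, which is worth saying explicitly since your parenthetical justification reads as if a global Whitney move were being invoked. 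With those two points made explicit, your argument is correct and is the same argument given in \cite{fq1}.
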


\noindent Now we state and prove Schneiderman's knot concordance invariant.

\begin{Thm}[\cite{s1}]\label{null} The map   $$ \begin{array}{cccc}
\mu: & \mathcal{C}_1(Y) & \to & \widetilde{\Lambda} \\
 & k &\mapsto & \mu(k)   
\end{array} $$ is well defined and onto.
\end{Thm}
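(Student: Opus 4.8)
The plan is to establish three things in turn: that $\mu(k) \in \widetilde{\Lambda}$ does not depend on the auxiliary immersed disk used to compute it (so $\mu$ is well defined on knots), that $\mu$ is unchanged under concordance (so it descends to $\mathcal{C}_1(Y)$), and that every class in $\widetilde{\Lambda}$ is realized.

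For well-definedness on knots, first I would note that a null-homotopic $k$ always bounds some generically immersed disk $D \looparrowright Y \times I$: a null-homotopy gives a map $(D^2, \partial D^2) \to (Y\times I, k)$, and since $2+2 = 4 = \dim(Y \times I)$, general position makes it an immersion with isolated transverse double points, so $\mu(D)$ is defined. The real issue is independence of $D$. Given two such disks $D_0, D_1$ with $\partial D_0 = \partial D_1 = k$, Proposition 1.7 already gives $\mu(D_0) = \mu(D_1)$ in $\widetilde{\Lambda}$ whenever $D_0$ and $D_1$ are homotopic rel boundary. In general they need not be: the set of rel-boundary homotopy classes of disks bounding $k$ is a torsor over $\pi_2(Y \times I) \cong \pi_2(Y)$, and I expect this $\pi_2$-ambiguity to be the main obstacle. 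The way I would dispose of it is to observe that the difference class is carried by a $2$-sphere $S$ that factors through the $3$-manifold $Y$ (it represents an element of $\pi_2(Y)$), and any such sphere has vanishing reduced self-intersection number in $Y \times I$: at each double curve of the image in $Y$ one separates the two sheets in the extra interval direction, making $S$ embedded. Since passing from $D_0$ to $D_1$ amounts, up to homotopy rel boundary, to an interior connected sum with such an $S$, and since the resulting correction to $\mu$ is $\mu(S) = 0$ together with an intersection term that vanishes by the same push-off in the $I$-direction, we get $\mu(D_0) = \mu(D_1)$ after applying Proposition 1.7. The bookkeeping of signs and of the cross term $\lambda(D_0, S)$ is routine once this vanishing is in hand.

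Next I would show $\mu$ is a concordance invariant. If $k_0 \sim k_1$ via a smoothly embedded annulus $C$, then stacking $C$ on top of an immersed disk $D_1$ bounding $k_1$ produces an immersed disk $D_0 = C \cup_{k_1} D_1$ bounding $k_0$. Because $C$ is embedded and meets $D_1$ only along $k_1$, the disk $D_0$ has exactly the same interior double points as $D_1$, with the same signs; routing the whisker for $D_0$ through $C$ to the old basepoint leaves every loop $g_p$ unchanged in $\pi_1(Y)$, since the portion traversing $C$ is cancelled. Hence $\mu(k_0) = \mu(D_0) = \mu(D_1) = \mu(k_1)$, and by the previous paragraph this is independent of all choices.

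Finally, for surjectivity I would realize the generators explicitly. Given $g \in \pi_1(Y)$ with $g \neq 1$, represent $g$ by an embedded loop and build, as in Figure~\ref{figure3}, a null-homotopic knot $k_g$ bounding an immersed disk with a single double point whose associated group element is $g$ and whose sign is $+1$; reversing that local crossing gives a knot with $\mu = -[g]$. Since $\mu$ is additive under the local connected sum of knots — boundary-connect-summing the bounding disks simply unions their double-point sets — an arbitrary class $\sum_i n_i[g_i] \in \widetilde{\Lambda}$ is realized by connect-summing $|n_i|$ copies of the appropriate $k_{g_i}^{\pm}$. This exhibits $\mu$ as onto.
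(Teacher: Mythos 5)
Your overall decomposition (independence of the choice of disk, concordance invariance, surjectivity) is the same as the paper's, and your concordance-invariance and surjectivity arguments are essentially identical to the ones given there. The genuine gap is in the key step of well-definedness: the claim that any sphere factoring through $Y \subset Y\times I$ has vanishing reduced self-intersection because ``at each double curve of the image in $Y$ one separates the two sheets in the extra interval direction, making $S$ embedded.'' This is false for an arbitrary generic map $S^2 \to Y^3$. Such a map has double curves, triple points and branch points, and along a double curve the two sheets need not be globally distinguishable: the preimage of a double curve $C$ may be a \emph{single} circle double covering $C$, so that traversing $C$ once interchanges the sheets. For such a curve no continuous $I$-coordinate can separate the sheets: the difference of the two lifts of the $I$-coordinate is odd with respect to the deck involution of the connected double cover, hence vanishes somewhere by the intermediate value theorem. (Triple points impose a further consistency condition on sheet orderings.) This is exactly why lifting generic surfaces in $3$-manifolds to embeddings in $4$-manifolds is a nontrivial theory with known non-liftable examples, and your disposal of the cross term $\lambda(D_1,S)$ rests on the same unjustified push-off. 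Note also that the contributions of such ``interchanging'' double curves are not obviously killed in $\widetilde{\Lambda}$, since the associated group elements satisfy $g=g^{-1}$ and order-two elements survive the quotient by $\{g-g^{-1}\}$.

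The statement you need --- that $\mu$ vanishes on all of $\pi_2(Y\times I)$ --- is true, but it is proved by choosing a good representative of the homotopy class rather than by pushing off an arbitrary one. This is what the paper does: it glues the two disks into a sphere $S = D \cup_k D'$ and invokes Proposition 3.12 of Hatcher's notes \cite{h1} (a consequence of the sphere theorem), by which $\pi_2(Y)$ is generated as a $\pi_1(Y)$-module by a \emph{disjoint collection of embedded} spheres; any class is then represented in $Y\times I$ by parallel embedded copies of these generators placed at distinct $I$-levels and tubed together along embedded arcs realizing the group elements. That representative is embedded, so its self-intersection number vanishes, and homotopy invariance of $\mu$ in $\widetilde{\Lambda}$ (your Proposition 1.7) transfers this to every representative of the class, in particular to $S$, giving $\mu(D)=\mu(D')$. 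If you replace your push-off paragraph with this argument, the rest of your proof goes through as written.
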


\begin{proof} We recall the proof from \cite{s1}.

\noindent \textbf{Well Defined:} Let $D$ and $D^{'}$ be singular null-concordances of a knot $k$, taking a singular sphere $ S=D \underset{k}{\cup} D^{'} \subset Y\times I $  gives $ S\in\pi_2(Y\times I) = \pi_2(Y)$. By \cite{h1} Proposition 3.12, there exists a disjoint collection of embedded 2-spheres generating $\pi_2(Y)$ as a $\pi_1(Y)-$module. Tubing these generators together in $Y\times I $ we get an embedded sphere in $Y \times I$. This implies $$\mu(S) = 0 = \mu(D) - \mu(D^{'}),$$ 
therefore $\mu(k)$ doesn't depend on $D$. 

\vspace{.05in}
\noindent \textbf{Concordance Invariance:} If $k_1, k_2 \in \mathcal{C}_1(Y)$ and $k_1\sim k_2$ then $\mu(k_2) = \mu(C\cup D) = \mu(D) = \mu(k_1) $ where $C$ is a concordance from $k_1$ to $k_2$, and $D$ is the singular concordance of $k_2$.
 
 \vspace{.05in}
\noindent \textbf{Surjectivity:} To construct $\pm g\in \Z[\pi_1(Y)]$ start with  an unknot $u$ and push an arc from $u$ around a loop representing $g\in\pi_1(Y)$ and create a $\pm $ clasp as in Figure~\ref{figure1}. Iterating this process one can get any desired element in $ \Z[\pi_1(Y)]$ via connected summing of such knots. \end{proof}
\begin{figure}[ht]
\begin{center}  
\includegraphics[scale=.8]{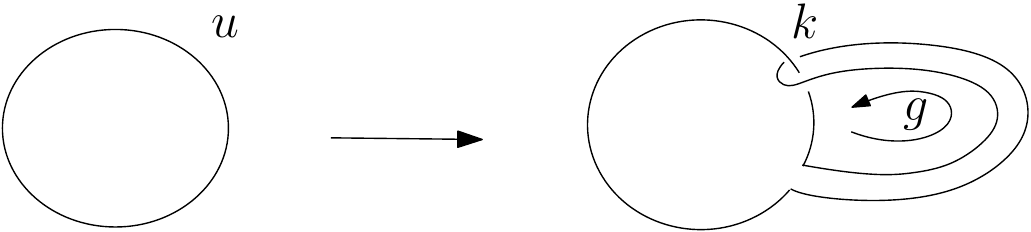}   
\caption{$\mu(k) = g$} 
\label{figure1} 
\end{center}
\end{figure} 

\begin{Lem}
 \label{lemma6} For any knots $k \in \mathcal{K}_1(Y)$,  $k^{'} \subset S^3$ we have  \begin{equation*}
 \mu (k \# k^{'} ) = \mu(k).
 \end{equation*} 
 This implies that  $\mu: \widetilde{\mathcal{C}}_1(Y) \to \widetilde{\Lambda}$ is well defined and onto. 
\end{Lem}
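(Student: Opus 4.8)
The plan is to exploit the fact that forming the connect sum with a knot $k' \subset S^3$ is a purely local operation supported in a small ball of $Y$, so it can only alter Wall's self-intersection number by contributions that become trivial once we pass to the quotient $\widetilde{\Lambda}$. Concretely, I would fix a singular null-concordance $D \looparrowright Y\times I$ of $k$ together with a whisker, and I would build a convenient singular null-concordance of $k\# k'$ from $D$ by a local modification, then compute $\mu$ of the result and compare it with $\mu(D)=\mu(k)$.

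First I would produce a model for the summand. Realize $k\# k'$ by choosing a $3$-ball $B^3\subset Y$ meeting $k$ in a trivial arc and replacing that arc by the knotted arc of $k'$; the summand then lives in $B^3$, viewed as $S^3$ minus an open ball. Since $\pi_1(S^3)=1$, the knot $k'$ is null-homotopic, so it bounds a properly immersed disk $D'$ inside the $4$-ball $B:=B^3\times I \cong B^4 \subset Y\times I$. Banding $D$ to $D'$ along the trace of the connect sum yields a singular null-concordance $D\,\natural\, D'$ of $k\# k'$. Choosing the band in general position, the set of transverse double points of $D\,\natural\, D'$ is exactly the disjoint union of the double points of $D$ and those of $D'$, and no new intersections are introduced.

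Next I would compute $\mu(D\,\natural\, D')$. Because $D\,\natural\, D'$ is simply connected, each loop $g_p$ is independent of the path chosen on the disk, so I may place the disk basepoint $d_0$ inside $D'\subset B$. For a double point $p$ of $D$ the associated loop reproduces the contribution of $D$, and the $D$-part of the sum equals $\mu(D)=\mu(k)$. For a double point $p$ of $D'$, both sheets at $p$ lie in $D'$, so the loop traced on the disk from $d_0$ to $p$ and back along the other sheet is a loop in $B$; as $B\cong B^4$ is simply connected, this loop is null-homotopic in $Y\times I$, hence $g_p$ is trivial in $\pi_1(Y)$. Every such contribution therefore lands in the subgroup $\Z[1]$, which is quotiented out in $\widetilde{\Lambda}$. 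Consequently $\mu(k\# k')=\mu(D\,\natural\, D')=\mu(D)=\mu(k)$ in $\widetilde{\Lambda}$, which is the asserted identity.

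Finally, the identity immediately yields the two claims about $\mu$ on the almost-concordance quotient. If $k_1\dot{\sim}k_2$ then by definition $k_1\# k_1' \sim k_2\# k_2'$ for some $k_1',k_2'\subset S^3$; applying concordance invariance from Theorem~\ref{null} and then the identity just proved to each side gives $\mu(k_1)=\mu(k_1\# k_1')=\mu(k_2\# k_2')=\mu(k_2)$, so $\mu$ descends to a well-defined map on $\widetilde{\mathcal{C}}_1(Y)$. Surjectivity is inherited from Theorem~\ref{null}, since the natural projection $\mathcal{C}_1(Y)\to\widetilde{\mathcal{C}}_1(Y)$ is onto and $\mu$ is compatible with it. I expect the only genuinely delicate point to be the general-position bookkeeping: verifying that the banding creates no essential double points and that the whisker and disk basepoint can be arranged so the $D'$-loops are manifestly contained in the ball $B$. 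Once the double-point sets are identified and the $B$-loops are seen to be null-homotopic in $Y$, the remainder is formal.
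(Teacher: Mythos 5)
Your proposal is correct and follows essentially the same route as the paper: band-sum a singular null-concordance $D$ of $k$ with an immersed disk $D'$ bounded by $k'$ inside a simply connected piece of $Y\times I$, and observe that every double-point loop coming from $D'$ is trivial in $\pi_1(Y)$, hence lands in $\Z[1]$ and dies in $\widetilde{\Lambda}$. The only difference is cosmetic but worth noting: the paper keeps the basepoint and whisker on $D$, so the $D$-part is literally $\mu(D)$ and the $D'$-part is a conjugate $\beta\mu(D')\beta^{-1}\in\Z[1]$, whereas you move the basepoint into $D'$; since $\widetilde{\Lambda}$ is \emph{not} invariant under conjugation, this is legitimate only if the new whisker is the old one extended along the disk through the band (which your appeal to simple connectivity of the disk implicitly provides), as otherwise the $D$-contribution would be determined only up to conjugation.
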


\begin{proof}
 We will construct a singular disk which will give us the desired result. By definition, $k$ bounds a proper immersion of a disk $ D \subset Y\times I $, and similarly $k^{'}$ bounds $D^{'} \subset S^3\times I$. Any band sum $D \underset{b}{\#}D^{'}$ where the interior of $b$ is away from $k$ and $k^{'}$  gives a proper immersion of a disk in $Y\times I$ bounded by $k\#k^{'}$. Take the base point and the whisker of $D$ as a base point and a whisker for $D \underset{b}{\#}D^{'}$ so
$$ \mu(D \underset{b}{\#}D^{'}) = \mu(D) + \beta \mu(D^{'}) \beta^{-1},$$ 
where $\beta \in \pi_1(Y)$ is determined by the band $b$ and the whisker. On the other hand $\pi_1(S^3)=1$ and $D^{'}$ lies entirely in $S^3 \times I$ therefore $\beta \mu(D^{'}) \beta^{-1} = 0 \in \widetilde{\Lambda}$ hence
$$  \mu(D \underset{b}{\#}D^{'}) = \mu(D)\;,\; \text{and}\;\;  \mu(k \underset{b}{\#}k^{'}) = \mu(k). \qedhere $$ \end{proof} 

\noindent This observation implies that Schneiderman's concordance invariant $\mu$ is also an almost-concordance invariant on freely null-homotopic knots. 

\begin{proof}[Proof of Theorem \ref{main1}]

By Theorem~\ref{null} and Lemma~\ref{lemma6}, $\mu: \widetilde{\mathcal{C}}_1(Y) \to \widetilde{\Lambda}$ is well defined, onto, and is an almost-concordance invariant on null-homotopic knots. For every non-trivial element $g\in\pi_1(Y)$ the target space $\widetilde{\Lambda}$ contains a subgroup isomorphic to $\Z$ generated by g. \end{proof}

\begin{Exm}\label{example1} Let $W^4$ be a Mazur manifold as in Figure~\ref{figure2}. There are various ways to see that the boundary is not the 3-sphere. Its fundamental group is known to be non-trivial \cite{lf1}. 

\begin{figure}[ht]
\begin{center}  
\includegraphics[scale=.8]{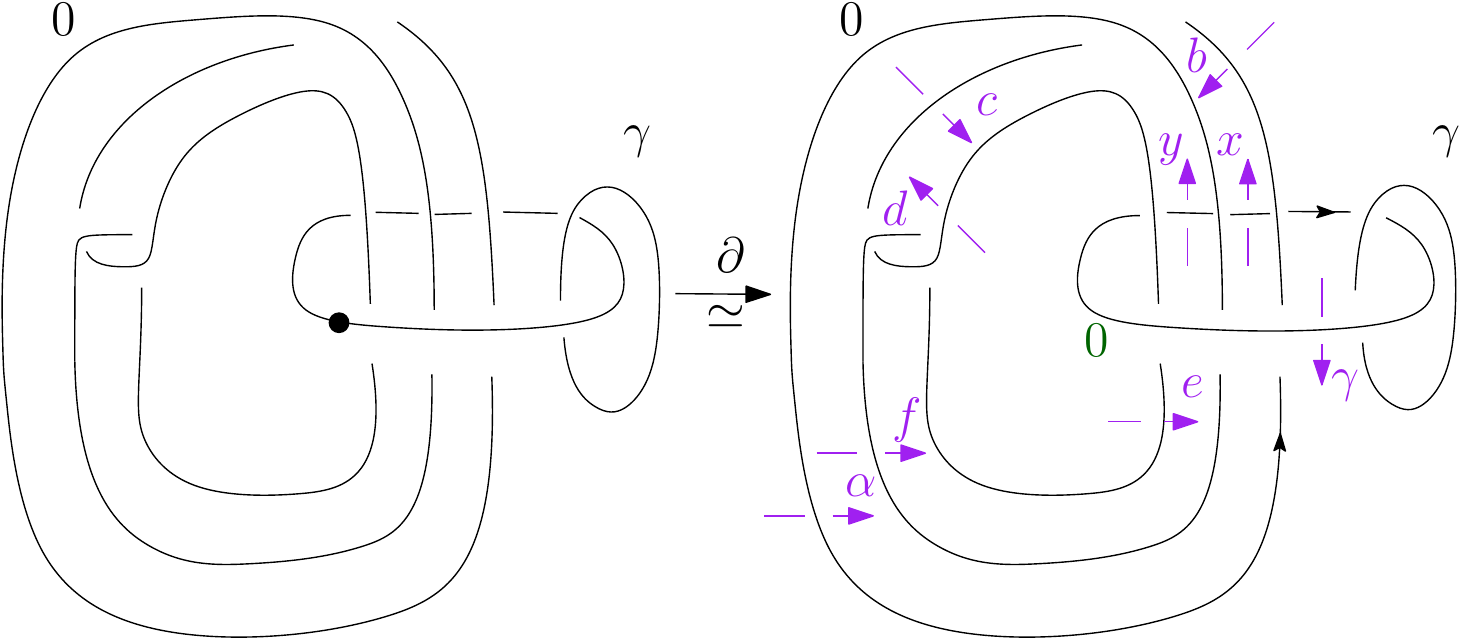}   
\caption{A homology sphere, Wirtinger presentation} 
\label{figure2} 
\end{center}
\end{figure}

\noindent By using Wirtinger presentation we describe the fundamental group:
$$\begin{array}{llrc}
\pi_1(Y)& = \left\lbrace\right.  \gamma, \alpha | & \gamma^2 \alpha \gamma^{-1} \alpha\gamma^{-1}\alpha^{-1} \gamma \alpha^{-1} \gamma \alpha^{-1} \gamma^{-1} \alpha \gamma^{-1} \alpha=1 &   \\ 
& & \gamma^{-1} \alpha \gamma^{-1} \alpha^2 \gamma \alpha \gamma^{-2} \alpha^{3} =1 & \left. \right\rbrace \end{array}.$$

\noindent Notice that setting $\gamma =1$ in this presentation would make this group trivial, hence $\gamma $ is a nontrivial element of $\pi_{1}(Y)$. To construct an example corresponding to Theorem~\ref{main1}, take an unknot and push an arc along a nontrivial loop $\gamma$ we get left diagram of Figure~\ref{figure3}. Obviously $\mu(k_1) = \gamma^{\pm} \in \widetilde{\Lambda}$ is nontrivial. Hence it is not almost-concordant to the unknot. On the other hand by iterating this process (i.e. increasing the number of twists) we can construct infinitely many null-homotopic knots $k_{n}$ with distinct $\mu$ invariant in the homology sphere, see the right diagram of Figure~\ref{figure3}.

\begin{figure}[ht]
\begin{center}  
\includegraphics[scale=.8]{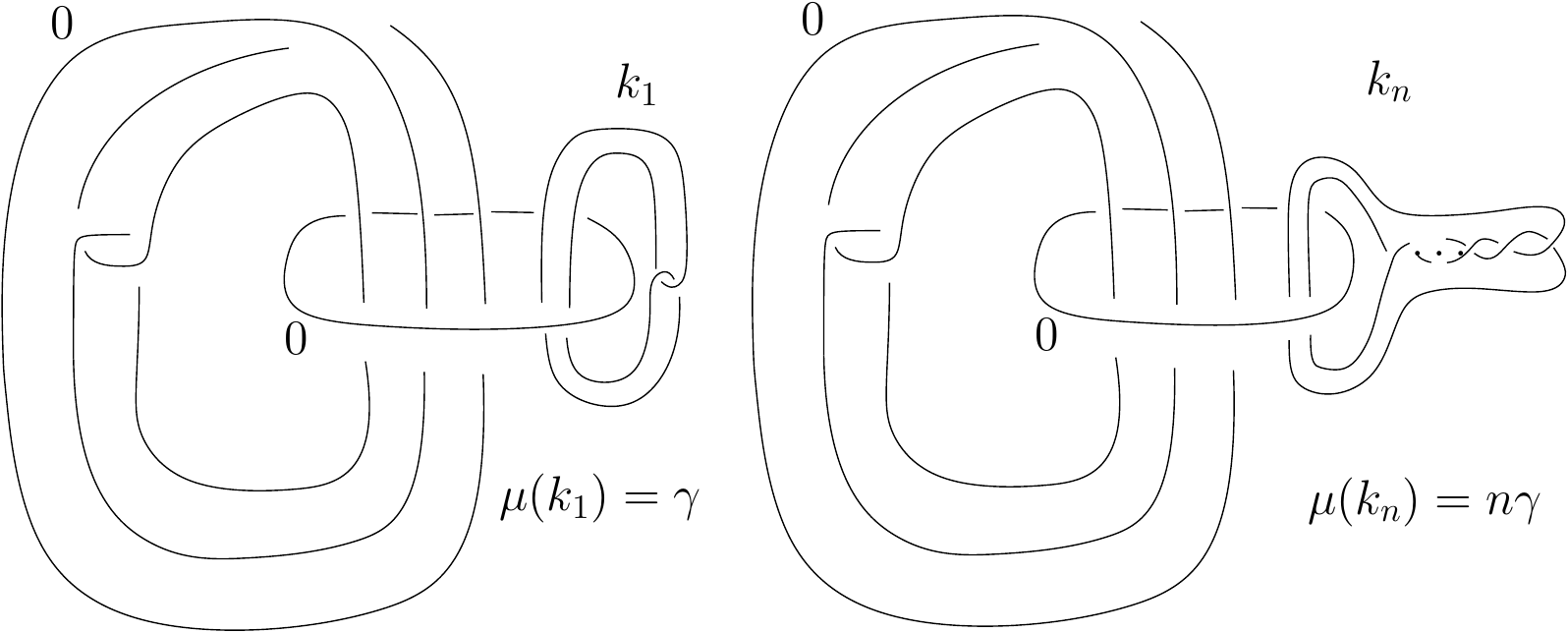}   
\caption{Distinct almost-concordant family} 
\label{figure3} 
\end{center}
\end{figure}

\end{Exm}

\section{Proof of Theorem 2}

\begin{proof}[Proof of Theorem \ref*{main2}]

First we introduce a (genus zero) cobordism move to a knot $k$, which starts with $k$, and ends with a two components link, consisting of the knot obtained from $k$ by changing one of its crossings union a small linking circle, as shown in Figure~\ref{figure7}. Let $k$ be a knot freely homotopic to $k'=S^1\times pt$ in $S^1\times S^2$, one can go from $k$ to $k'$ by finitely many crossing changes and isotopies. Change all the necessary crossings of $k$ by the cobordism described above.  Notice that for every crossing change, we get a small linking circle to the resulting knot. See Figure~\ref{figure8} as an example. It is obvious from Figure~\ref{figure9} that all those small circles which link $k'$ bound disks in $S^1\times S^2$ disjoint from $k'$. We accomplish this by sliding over the $0-$framed circle. By capping with disks these unknots we get a concordance from $k$ to $k'$ in $S^1\times S^2$. \end{proof}

 \begin{figure}[ht]
 \begin{center}  
 \includegraphics[scale=.8]{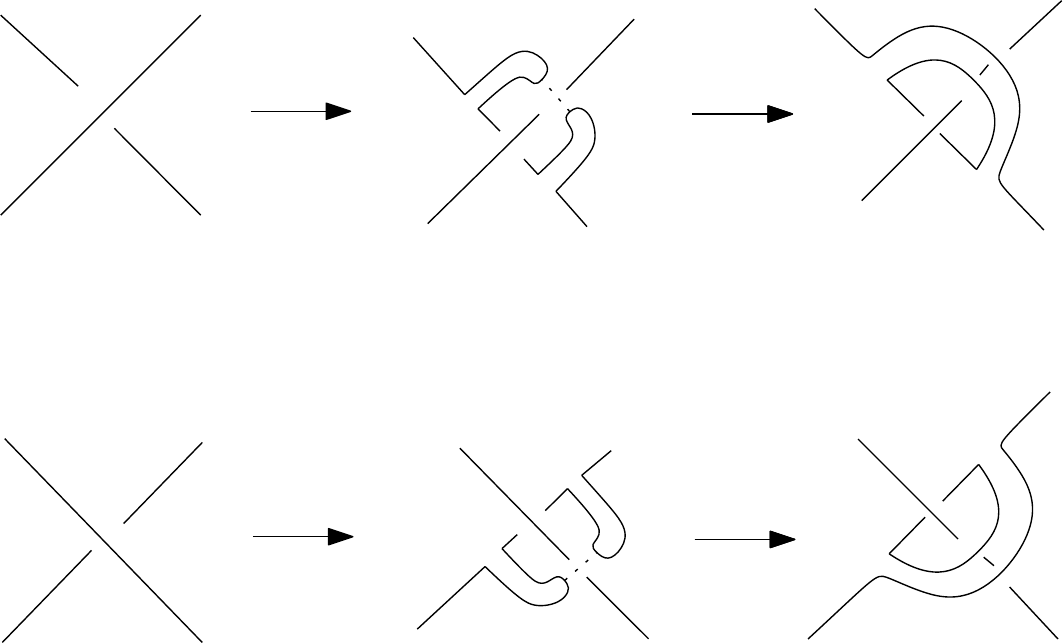}   
 \caption{Crossing change} 
 \label{figure7} 
 \end{center}
 \end{figure}

  \begin{figure}[ht]
  \begin{center}  
  \includegraphics[scale=.8]{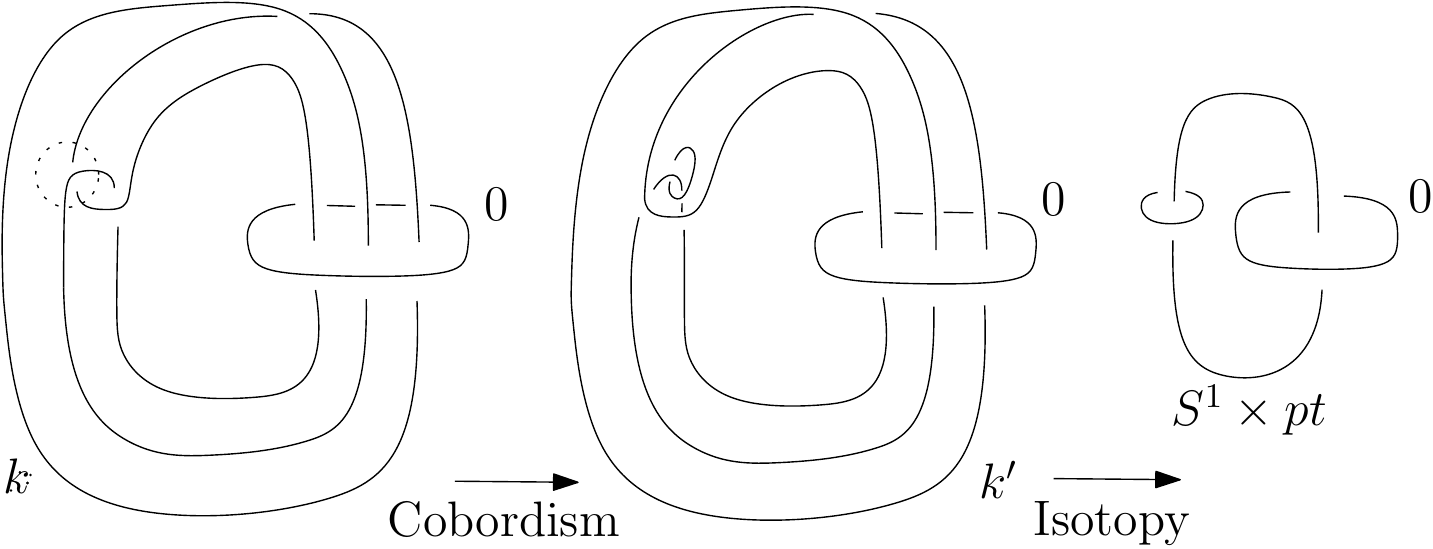}   
  \caption{An example of crossing change} 
  \label{figure8} 
  \end{center}
  \end{figure}

  \begin{figure}[ht]
 \begin{center}  
 \includegraphics[scale=.8]{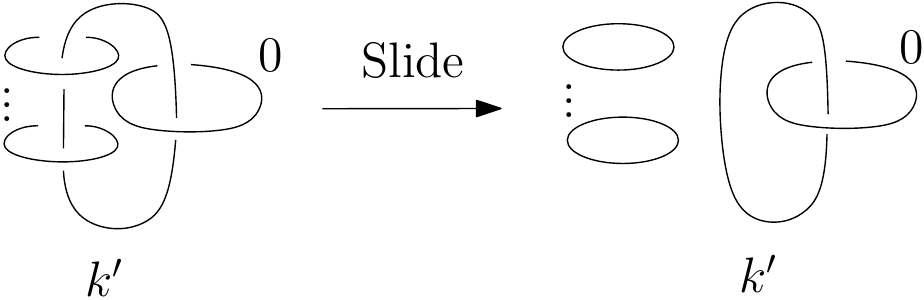}   
 \caption{Sliding and capping with disks} 
 \label{figure9} 
 \end{center} 
 \end{figure}

\section{PL-Slice}\label{pl-slice}
The notion of almost-concordance is same as the PL-concordance in $Y\times I$. Indeed, if $k_1$ and $k_2$ are PL-concordant then we may assume without loss of generality, the concordance has only one singular point which locally looks like a cone over a knot $k$. It is easy to see $k_1 \# -k$ is smoothly concordant to $k_2$ by removing a ball around the cone point and connecting two boundary components by removing neighbourhood of an arc lying on the concordance connecting $k_1$ to $k$. On the other hand if we have an almost concordance between $k_1$ and $k_2$ i.e. $k_1\#k'_1$ is concordant to $k_2\#k'_2$, then push the local knots inside the $4$-manifold and take the cone over the knots in some local ball to get a PL-concordance. Basically this tells us the family of knots we construct in Example ~\ref{example1} in particular in Figure~\ref{figure3} can not bound a PL-disk in the collar of the manifold but it can still bound in a $4$-manifold which $Y$ bounds. 

\noindent Next we see that none of these family members $\alpha_n$ in Figure~\ref{figure5} bounds a PL-disk in the Mazur manifold $W^4$. 

\begin{figure}[ht]
 \begin{center}  
 \includegraphics[scale=.8]{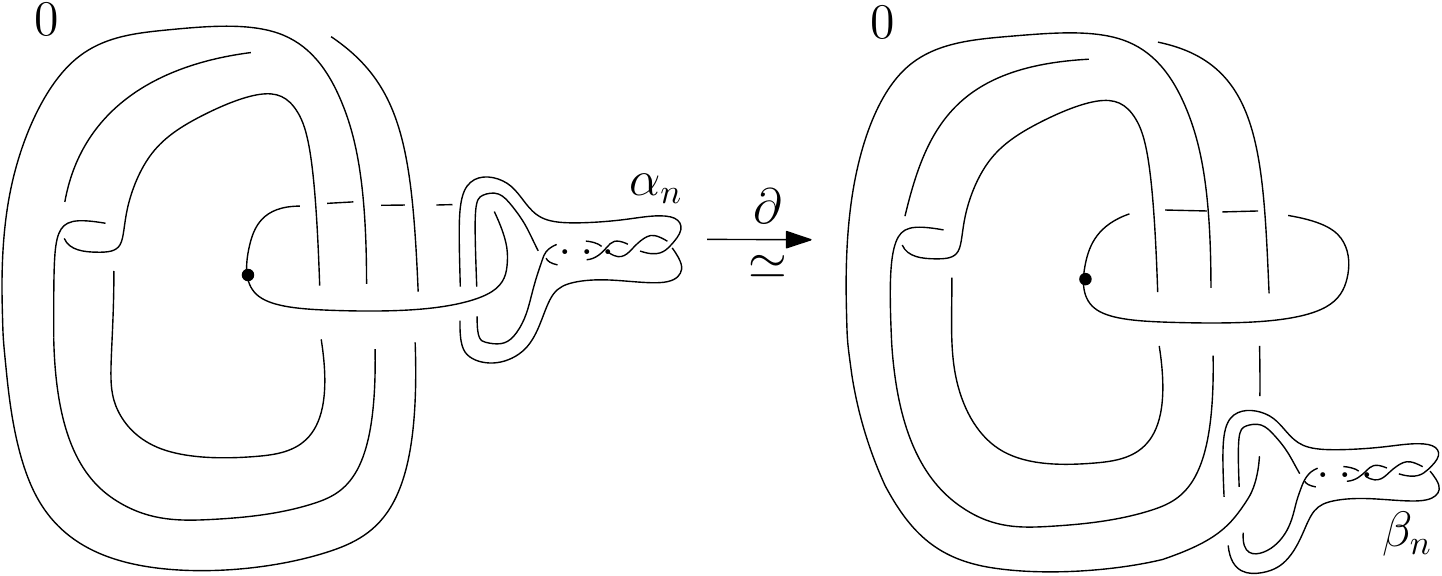}   
 \caption{Boundary diffeomorphism} 
\label{figure5}
 \end{center}
 \end{figure}

\noindent Here we imitate Akbulut's \cite{a1} \lq\lq A solution to a conjecture of Zeeman".  Observe that $W^4$ is a Stein domain by \cite{e1}. Consider the boundary diffeomorphism which takes $\alpha_n$ to $\beta_n$ as in Figure~\ref{figure5}, using $0 \leftrightarrow \bullet $ exchange and symmetry of the link surgery diagram of Mazur manifold. The knot $\beta_n$ is smoothly slice. To see that $\alpha_n$ is not slice we use the adjunction inequality as in \cite{am}. Let $F \subset W^4$ be a properly imbedded oriented surface in a Stein domain, such that $k = \partial F \subset \partial W^4$ is a Legendrian knot with respect to the induced contact structure. 

\noindent Let $f$ denote the framing of $k$ induced from the trivialization of the normal bundle of $F$, then:
  $$-\chi(F) \geq (tb(k) - f) + |rot(k)|.$$
\noindent Recall that the rotation number $rot(k)$, and the Thurston-Bennequin number $tb(k)$ are given by the formulae:
 $$rot(k) = \frac 1 2( \text{number of “downward” cusps} - \text{number of “upward” cusps} ),$$ 
  $$tb(k) = bb(k)- c(k),$$
 $bb(\alpha)$ is the blackboard framing (or writhe) of the front projection of $k$, and $c(k)$ is the number of right cusps.

\noindent Assume the curve $\alpha_n$ is slice so $\chi(F) = 1 $, $tb(\alpha_n) = 2n - (2n-1) = 1$, $rot(\alpha_n)=0$, $f=0 $, so we have a contradiction: $-1 \geq 1$, and therefore $\alpha_n$ is not slice. The same argument as in Theorem 1 \cite{a1} 
shows $\alpha_n$ does not bound a PL-disk in $W^4$.

\section{Topological Slice } Here we show that the family of knots that we constructed in the previous example are all topologically slice and therefore they are all distinct elements in the almost-concordance class of topologically slice knots on the boundary of the Mazur manifold.  

\noindent A knot $k$ in a homology sphere $Y$ has well-defined Alexander polynomial $\Delta_k(t) \in \Z\left[ t^{\pm} \right] $. Let $F$ be a Seifert surface of $k$ in $Y$ and $X$ be the knot complement. Then
$$\Delta_k(t) := \det (tS-S^T),$$  
where $S$ is an associated Seifert matrix of the bilinear form $\eta$
$$\begin{array}{cccc}
\eta: & \hg{1}(F;\Z) \times \hg{1}(F;\Z) & \to & \Z,\\
& \eta(\alpha,\beta)& = & lk(\alpha^+,\beta).
\end{array}$$
We adopt the convention $\alpha^+\in \hg{1}(X-F)$ is the image of $\alpha \in \hg{1}(F)$ via pushing $\alpha$ in the positive normal direction of $F$. As it is seen in Figure~\ref{figure4}, the Seifert surface $F$ of $k_n$ links the $0-$framed knot. One of its generators $x$ links that knot. In this case $lk(x^+,x)$ is not a direct calculation, since we have to find a Seifert surface $F_x$ ( or $F_{x^+}$) of $x$ (or $x^+$) to calculate  $lk(x^+,x)$. On the other hand, using the lemma below we can calculate the Seifert matrix easily.

\begin{Lem}[Lemma 7.13 of \cite{sa1}]
Let $k \cup l$ be a boundary link (i.e. knots $k$ and $l$ bound disjoint Seifert surfaces) in a homology sphere $Y$, and $Y'$ is a $\pm 1$ surgery of $Y$ along $k$. Then $\Delta_{l\subset Y}(t) = \Delta_{l'\subset Y'}(t)$. where $l' \subset Y'$ is the image of $l \subset Y$ under the surgery.
\end{Lem}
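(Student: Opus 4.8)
The plan is to compute both Alexander polynomials from Seifert matrices built on the \emph{same} surface, and then to show that the surgery on $k$ leaves the relevant linking numbers untouched. Since $k\cup l$ is a boundary link, I would begin by fixing disjoint Seifert surfaces $F_k$ and $F_l$ for $k$ and $l$. Because $F_l$ can be taken disjoint from a tubular neighbourhood $N(k)$, it lies entirely in the knot exterior $X:=Y\setminus N(k)$, which is the common piece shared by $Y$ and $Y'$. Hence the very same $F_l$ is a Seifert surface for $l'$ in $Y'$, and it suffices to prove that the Seifert form $\eta(\alpha,\beta)=lk(\alpha^{+},\beta)$ on $\hg{1}(F_l)$ returns the same integers whether the linking numbers are measured in $Y$ or in $Y'$; equality of the two Seifert matrices then gives $\det(tS-S^{T})$ on the nose.

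First I would record that $Y'$ is again a homology sphere, so that $\Delta_{l'\subset Y'}$ is even defined: in a homology sphere $k$ carries a canonical $0$-framing, and $\pm1$ surgery with respect to it yields $\hg{1}(Y')=0$. The crux is then the observation that every curve $\alpha$ carried by $F_l$ satisfies $lk_Y(\alpha,k)=0$. Indeed, $lk_Y(\alpha,k)$ is the algebraic intersection of $\alpha$ with $F_k$, and $\alpha\subset F_l$ is disjoint from $F_k$ by the boundary-link hypothesis; the same holds for the pushoffs $\alpha^{+}$. Equivalently, since $\hg{1}(X)\cong\Z$ is generated by the meridian of $k$ and the class of a loop there is detected by its linking number with $k$, each generator $\alpha,\beta$ of $\hg{1}(F_l)$ is null-homologous in $X$.

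With this in hand the computation is short. For $\alpha,\beta\in\hg{1}(F_l)$ I would choose a $2$-chain $\Sigma_\beta$ lying in $X$ with $\partial\Sigma_\beta=\beta$, which exists precisely because $\beta$ is null-homologous in $X$. Then
$$lk_Y(\alpha^{+},\beta)=\alpha^{+}\cdot\Sigma_\beta,$$
the algebraic intersection number computed inside $X$. The identical chain $\Sigma_\beta\subset X\subset Y'$ still bounds $\beta$ in $Y'$, and $\alpha^{+}$ likewise lies in $X$, so the intersection count is literally the same integer:
$$lk_{Y'}(\alpha^{+},\beta)=\alpha^{+}\cdot\Sigma_\beta=lk_Y(\alpha^{+},\beta).$$
Therefore the two Seifert matrices coincide and $\Delta_{l\subset Y}(t)=\Delta_{l'\subset Y'}(t)$.

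The step I expect to be the genuine obstacle is justifying this linking-number invariance, i.e.\ that the bounding $2$-chain may be pushed off the surgery region. This is exactly where the boundary-link hypothesis is indispensable: it forces $lk_Y(\alpha,k)=lk_Y(\beta,k)=0$, so that the surgery correction term, which in general is proportional to $lk_Y(\alpha,k)\,lk_Y(\beta,k)$, vanishes. Were $k$ and $l$ merely linked, this term would survive and the Seifert form would change under surgery. I would therefore make sure to phrase the argument so that the vanishing of these linking numbers is what allows $\Sigma_\beta$ to be chosen disjoint from $k$, and hence to remain a valid bounding chain after the filling is replaced.
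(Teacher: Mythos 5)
Your proof is correct. There is, however, no in-paper argument to compare it against: the paper quotes this lemma verbatim from Saveliev (Lemma 7.13 of \cite{sa1}) and gives no proof of its own. Your argument --- keep the Seifert surface $F_l$ inside the common knot exterior $X$, observe that every curve carried by $F_l$ has zero linking number with $k$ (being disjoint from $F_k$) and is therefore null-homologous in $X$, and conclude that the Seifert pairing is computed by intersections with $2$-chains lying entirely in $X$, hence is literally unchanged by the surgery --- is the standard proof of this fact, and it correctly isolates where the boundary-link hypothesis and the $\pm 1$ framing (which keeps $Y'$ a homology sphere, so that linking numbers remain well defined) are used.
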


\noindent Since $\alpha$ and $k_n$ have disjoint Seifert surfaces, see the left diagram of Figure~\ref{figure4}, we perform $-1$ surgery on $\alpha$, and after some isotopy of $k_n$ we get the right diagram. Therefore for the Seifert matrix $S=\left( \begin{array}{cc}
0 & 1 \\
0 & n 
\end{array} \right) $ we have the corresponding Alexander polynomial
$$\Delta_{k_n \subset Y}(t) = \det \left( tS-S^T\right) =  t \dot{=} 1 .$$
Thanks to Freedman and Quinn's \cite{fq1} Theorem 11.7B these knots are all topologically slice.  

\begin{figure}[ht]
\begin{center}  
\includegraphics[scale=.8]{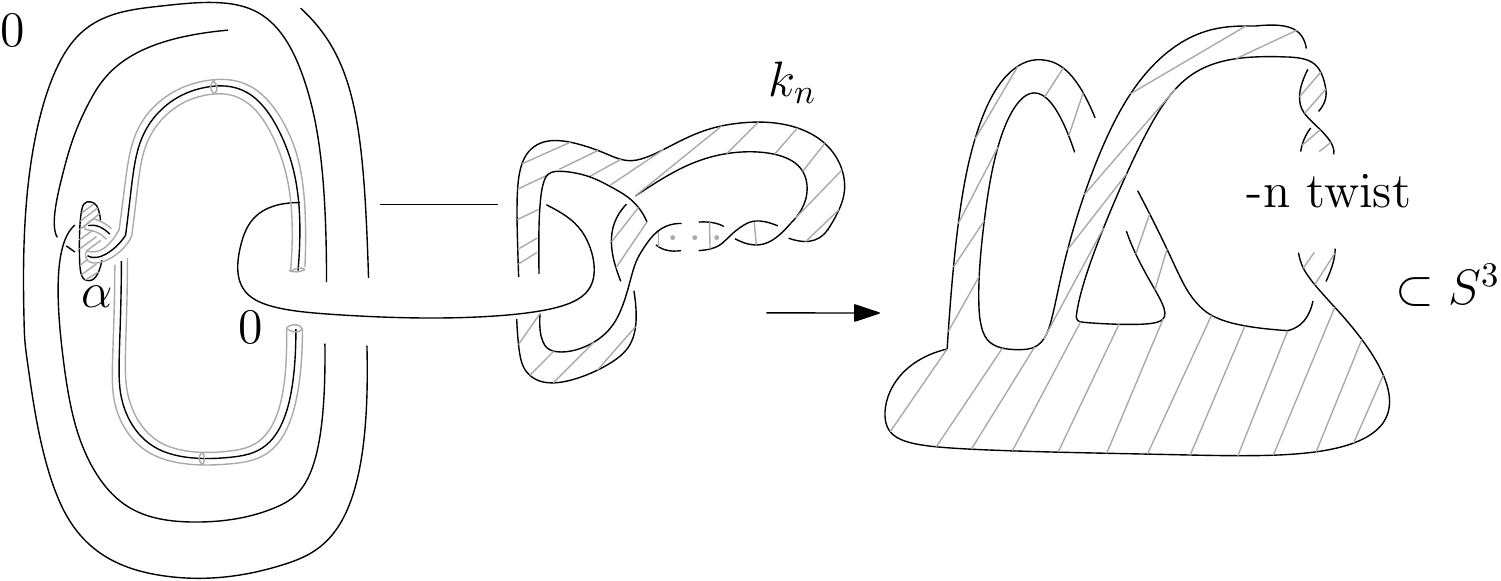}   
\caption{Alexander polynomial in homology sphere} 
\label{figure4} 
\end{center}
\end{figure}

\end{document}